\renewcommand{\l@section}{\@tocline{1}{0pt}{0pc}{}{}}
\renewcommand{\l@subsection}{\@tocline{2}{0pt}{0pc}{}{}}
\renewcommand{\tocsection}[3]{%
  \indentlabel{\@ifnotempty{#2}{\makebox[3em][l]{\ignorespaces#1 #2.}}}#3%
}
\definecolor{badgerred}{rgb}{0.715,0.004,0.004}
\definecolor{burntorange}{rgb}{0.801,0.332,0.0}
\newcommand{\R}{\mathbb{R}}
\newcommand{\hN}{{}^*\mathbb{N}}
\newcommand{\hR}{{}^*\mathbb{R}}
\newcommand{\st}{\mathop{\mathrm{St}}}
\newtheorem{theorem}{Theorem}[subsection]
\newtheorem{lemma}[theorem]{Lemma}
\newtheorem{definition}[theorem]{Definition}
\begin{document}

\title[Nonstandard Reaction Diffusion]{Nonstandard existence proofs\\
  for reaction diffusion equations} \author{Connor Olson} \address{C.O.:
  Department of Mathematics, University of Wisconsin -- Madison}

\author{Marshall Mueller} \address{M.M.: Department of Mathematics,
Tufts University -- 503 Boston Avenue, Medford, MA 02155}

\author{Sigurd B.~Angenent} \address{S.B.A.: Department of Mathematics,
  University of Wisconsin -- Madison} \date{\today}


\begin{abstract}
  We give an existence proof for distribution solutions to a scalar reaction
  diffusion equation, with the aim of illustrating both the differences and the
  common ingredients of the nonstandard and standard approaches.  In particular,
  our proof shows how the operation of taking the standard part of a nonstandard
  real number can replace several different compactness theorems, such as
  Ascoli's theorem and the Banach--Alaoglu theorem on weak$^*$-compactness of
  the unit ball in the dual of a Banach space.
\end{abstract}
\maketitle
\tableofcontents

\section{Introduction}

\subsection{Reaction diffusion equations}
We consider the Cauchy problem for scalar reaction diffusion equations of the
form
\begin{subequations}
  \begin{equation}\label{eq-rde}
    \frac{\partial u}{\partial t} = D \frac{\partial^2 u}{\partial x^2} + f(u(x, t)), \qquad
    x\in \R, t\geq0
  \end{equation}
  with prescribed initial condition
  \begin{equation}\label{eq-initial-condition}
    u(x, 0) = u_0(x).
  \end{equation}
\end{subequations}
In the setting of reaction diffusion equations the function $u(x,t)$ represents
the density at location $x\in\R$ and time $t\geq0$ of some substance which
diffuses, and simultaneously grows or decays due to chemical reaction,
biological mutation, or some other process.  The term
$D\partial^2u/\partial x^2$ in the PDE \eqref{eq-rde} accounts for the change in 
$u$ due to diffusion, while the nonlinear term $f(u)$ accounts for the reaction
rates.  The prototypical example of such a reaction diffusion equation is the
Fisher/KPP equation (see \cite{1937KPP}, \cite{1937Fisher}) in which the
reaction term is given by $f(u) = u-u^2$.

The Cauchy problem for the reaction diffusion equation \eqref{eq-rde} is to find
a function $u:\R\times[0, \infty)\to \R$ that satisfies the partial differential
equation \eqref{eq-rde} as well as the initial condition
\eqref{eq-initial-condition}.  This is a classical problem, and the existence of
such solutions is well known (see for example \cite{henry1981geometric} or
\cite{pazy2012semigroups}).  As various techniques for constructing solutions
are known, including the use of finite difference approximations to construct
solutions (see \cite[Ch.7.2]{john1991partial}), our main goal is not to give
another existence proof.  Instead, we were inspired by several introductory
texts on nonstandard analysis (notably, Keisler's undergraduate calculus text
\cite{keisler2012elementary}, Goldblatt's more advanced introduction to the
hyperreals \cite{GoldblattHyperrealLectures}, Nelson's ``radically elementary
approach'' to probability \cite{NelsonRadicallyElementary}, as well as Terry
Tao's blog post \cite{TerryTaoBlogpost}), and wanted to see what some standard
existence proofs would look like in the language of nonstandard analysis.  

In \cite{keisler2012elementary}
Keisler presents a proof of Peano's existence theorem for solutions to ordinary
differential equations
\begin{equation}\label{eq-ode}
  \frac{dx}{dt} = f(t, x(t)), \quad x(0) = x_0,
\end{equation}
using nonstandard analysis.
One possible standard proof of Peano's theorem proceeds by constructing the
numerical approximation to the solution by solving Euler's method for any small
step size $\Delta t>0$, i.e., one defines numbers $x_{i, \Delta t}$ by setting
$x_{0, \Delta t } = x_0$, and then inductively solving
\begin{equation}
  \label{eq-Euler}
  \frac{x_{i+1, \Delta t} - x_{i, \Delta t}}{\Delta t} 
  = f(i\Delta t, x_{i, \Delta t}), \quad i=0,1,2,\dots
\end{equation}
The function $x_{\Delta t} : [0, \infty) \to \R$ obtained by linearly
interpolating between the values $x_{\Delta t}(i\Delta t) = x_{i, \Delta t}$ is
Euler's numerical approximation to the solution of the differential equation
\eqref{eq-ode}.  The standard analysis proof of Peano's existence theorem then
uses Ascoli's compactness theorem to extract a sequence of step sizes
$\Delta t_n \to 0$ such that of the approximate solutions $x_{\Delta t_n}(t)$
converge uniformly to some function $\tilde x:[0, \infty)\to\R$, and concludes
by showing that the limit $\tilde x$ is a solution to the differential equation
\eqref{eq-ode}.

The nonstandard proof in Keisler's text \cite{keisler2012elementary} follows the
same outline, but one notable feature of this proof is that instead of using
Ascoli's theorem, one ``simply'' chooses the step size $\Delta t$ to be a
positive infinitesimal number.  The approximate solution then takes values in
the hyperreals, and instead of applying a compactness theorem (Ascoli's in this
case), one ``takes the standard part'' of the nonstandard approximate solution.
The proof is then completed by showing that the function that is obtained
actually satisfies the differential equation.

The standard and nonstandard proofs have some common ingredients.  In both
proofs one must find suitable estimates for the approximate solutions
$x_{i, \Delta t}$, where the estimates should not depend on the step size
$\Delta t$. Namely, the approximate solutions $x_{\Delta t}$ should be uniformly
bounded, and they should be uniformly Lipschitz continuous
($|x_{\Delta t}(t) - x_{\Delta t} (s)|\leq L |t - s|$ for all
$t, s\in [0, \infty)$, $\Delta t>0$).  In the standard proof these estimates
allow one to use Ascoli's theorem; in the nonstandard proof they guarantee that
the standard part of the approximating solution with infinitesimally small
$\Delta t$ still defines a continuous function on the standard reals.

There appear to be two main differences between the standard and nonstandard
proofs.  The first, very obviously, is that the nonstandard setting allows one
to speak rigorously of infinitely small numbers, and thereby avoid the need to
consider limits of sequences.  The second difference is that the process of
``taking the standard part'' of a hyperreal number acts as a replacement for one
compactness theorem or another: in the nonstandard proof of Peano's theorem one
avoids Ascoli's theorem by taking standard parts.  This too is probably
well-known in some circles (Terry Tao makes the point in his blog post
\cite{TerryTaoBlogpost}), but is not as obviously stated in the nonstandard
analysis texts we have seen.

In this paper we intend to further illustrate this point by proving an existence
theorem for weak, or distributional solutions of certain partial differential
equations that is analogous to the proof of Peano's theorem sketched above (see
section~\ref{sec-distribution-def} below for a very short summary of the theory
of distributions.)  Thus, to ``solve'' the reaction diffusion equation
\eqref{eq-rde} we choose space and time steps $\Delta x>0$ and $\Delta t>0$, and
discretize the PDE by replacing the second derivative with a second difference
quotient, and the time derivative with a forward difference quotient, resulting
in a finite difference equation
\begin{equation}\label{eq-finite-difference}
  \frac{u(x, t+\Delta t) - u(x, t)}{\Delta t}
  =
  D \,\frac{u(x+\Delta x, t) - 2u(x,t) + u(x-\Delta x, t)}{(\Delta x)^2}
  + f(u(x, t)).
\end{equation}
This kind of discretization is very common in numerical analysis (see
\cite{LevequeFiniteDifferenceTextbook}, or \cite{press2007numerical}) For given
initial data (but no boundary data) one can use this difference equation to
inductively compute the values of $(x,t)$ for all $(x, t)$ in a triangular grid
(see Figure~\ref{fig-grid}).

Given a solution of the difference equation one can define a generalized
function, or distribution
\begin{equation}
  \label{eq-distribution-from-finite-diff-scheme}
  \langle U, \varphi\rangle \stackrel{\rm def}=
  \sum_t \sum_x U(x, t)\varphi(x,t)\, .
\end{equation}
In a standard existence proof of weak solutions to the equation one would now
use a compactness theorem to extract a sequence
$(\Delta x_i, \Delta t_i)\to (0, 0)$ for which the corresponding distributions
$U_i$ converge in the sense of distributions, and then show that the limiting
distribution satisfies the PDE \eqref{eq-rde}.  The compactness theorem that is
required in this proof is the Banach-Alaoglu theorem about weak$^*$-compactness
in duals of Banach spaces (in our case, $L^\infty(\R^2)$ which is the Banach
space dual of $L^1(\R^2)$).

The nonstandard proof, which we give in this paper, avoids the compactness
theorem (or notions of Lebesgue integration required to define $L^\infty$) by
letting $\Delta x$ and $\Delta t$ be infinitesimal positive hyperreals, and by
taking the standard part of the expression on the right in
\eqref{eq-distribution-from-finite-diff-scheme}.  In both the standard and
nonstandard setting this approach works for the linear heat equation, i.e.~in
the case where the reaction term $f(u)$ is absent (i.e.~$f(u)\equiv 0$).  The
nonlinear case is a bit more complicated because there is no adequate definition
of $f(u)$ when $u$ is a distribution rather than a point-wise defined function.
In both the standard and nonstandard proofs we overcome this by proving that the
approximating functions are H\"older continuous, so that $f(u(x,t))$ can be
defined.  In the standard proof this again allows one to use Ascoli-Arzela and
extract a convergent subsequence.  However, since the domain $\R^2$ is not
compact, Ascoli-Arzela cannot be applied directly, and the standard proof
therefore requires one to apply the compactness theorem on an increasing
sequence of compact subsets $K_i\subset \R^2$, after which Cantor's diagonal
trick must be invoked to get a sequence of functions that converges uniformly on
every compact subset of $\R^2$.  As we show below, these issues do not come up
in the nonstandard proof.

\subsection{Comments on nonstandard analysis}

We will not even try to give an exposition of nonstandard analysis in this
paper, and instead refer the reader to the many texts that have been written on
the subject (e.g., a very incomplete
list:\cite{keisler2007foundations,keisler2012elementary,GoldblattHyperrealLectures,NelsonRadicallyElementary,AlbeverioNonstandardMethods,TerryTaoBlogpost}).

There are a few different approaches to using the hyperreals.  Keisler, in his
undergraduate nonstandard calculus text \cite{keisler2012elementary} gives an
axiomatic description of the hyperreals and their relation with the standard
reals.  In this approach functions that are defined for standard reals
automatically extend to the hyperreals, according to the \emph{transfer
  principle.}    A different
approach that also begins with an axiomatic description of the hyperreals can be
found in Nelson's ``radically elementary'' treatment of probability theory
\cite{NelsonRadicallyElementary}.

Our point of view in this paper is that of internal set theory as explained in
Goldblatt's book \cite{GoldblattHyperrealLectures} (see also Keisler's
``instructor's guide'' \cite{keisler2007foundations} to his calculus text).
Goldblatt explains the construction of the hyperreals using non principal ultra
filters (which can be thought of as analogous to the construction of real
numbers as equivalence classes of Cauchy sequences of rational numbers).  He
then extends this construction and defines internal sets, internal functions,
etc.

\subsection*{Acknowledgement}
SBA has enjoyed and benefited from conversations about nonstandard analysis with
members of the logic group at Madison, and would in particular like to thank
Terry Millar for his relentless advocacy of matters both infinitesimal and
unlimited.

\section{Distribution solutions}

\subsection{The definition of distributions}
\label{sec-distribution-def}

We recall the definition of a ``generalized function,'' i.e.~of a distribution,
which can be found in many textbooks on Real Analysis, such as Folland's book
\cite{FollandRealAnalysis}.

A real valued function $f$ on $\R^2$ is traditionally defined by specifying its
values $f(x, y)$ at each point $(x, y)\in\R^2$.  In the Theory of Distributions
a generalized function $f$ is defined by specifying its weighted averages
\begin{equation}\label{eq-function-as-distribution}
  \langle f, \varphi\rangle = \int_{\R^2} f(x, y) \varphi(x,y) \, dx\,dy
\end{equation}
for all so-called ``test functions'' $\varphi$.  A test function is any function
$\varphi:\R^2\to\R$ that is infinitely often differentiable, and which vanishes
outside a sufficiently large ball $B_R = \{(x,y)\in\R^2 : x^2+y^2 < R\}$ whose
radius $R$ is allowed to depend on the particular test function.  The set of all
test functions, which is denoted by $C_c^\infty (\R^2)$, or sometimes by
$\mathcal D(\R^2)$, is an infinite dimensional vector space.  \emph{By
  definition,} a distribution is any linear functional
$T: C_c^\infty(\R^2) \to \R$.  The most common notation for the value of a
distribution $T$ applied to a test function $\varphi$ is
$\langle T, \varphi\rangle$.  For instance, if $f:\R^2\to\R$ is a continuous
function, then the equation \eqref{eq-function-as-distribution} defines $f$ as a
distribution.  The canonical example of a distribution that does not correspond
to a function $f$ is the Dirac delta function, which is defined by
\[
  \langle \delta, \varphi \rangle \stackrel{\rm def}= \varphi(0,0).
\]
The full definition of a distribution $T$ includes the requirement that the
value $\langle T,\varphi\rangle$ depend continuously on the test function
$\varphi$.  To state this continuity condition precisely one must introduce a
notion of convergence in the space of test functions $C_c^\infty(\R^2)$.  We
refer the reader to \cite{FollandRealAnalysis} for the details, and merely
observe that a sufficient condition for a linear functional
$\varphi \mapsto \langle T, \varphi\rangle$ to be a distribution is that there
exist a constant $C$ such that
\begin{equation}\label{eq-T-bounded-by-int-phi}
  \left|\langle T,\varphi\rangle \right| \leq C \iint_{\R^2} |\varphi(x, y)| \, dx\, dy
\end{equation}
holds for all test functions $\varphi$.  Alternatively, if a constant $C$ exists
such that
\begin{equation}
  \label{eq-T-bounded-by-sup}
  \left|\langle T,\varphi\rangle \right| \leq C \sup_{(x, y)\in\R^2} |\varphi(x, y)|
\end{equation}
holds for all $\varphi\in C_c^\infty(\R^2)$, then $T$ also satisfies the
definition of a distribution.  The conditions \eqref{eq-T-bounded-by-int-phi}
and \eqref{eq-T-bounded-by-sup} are not equivalent: either one of these implies
that $T$ is a distribution.

\subsection{Distributions defined by nonstandard functions on a grid}
Let $dx, dy$ be two positive infinitesimal hyperreal numbers, and let $N$, $M$
by to positive hyperintegers such that $Ndx$ and $Mdy$ are unlimited.  Consider
the rectangular grid
\begin{equation}\label{eq-G-defined}
  G = \left\{ (k\,dx , l\, dy) \in \hR^2 \mid
    k, l\in \hN, |k|\leq N, |l|\leq M
  \right\}
\end{equation}
From the point of view of nonstandard analysis and internal set theory, $G$ is a
hyperfinite set, and for any internal function $f:G\to\hR$ there is an
$(x,y)\in G$ for which $f(x, y)$ is maximal.
\begin{lemma}
If $g:\R^2\to\R$ is a continuous function with compact support, then
\[
  \int_{\R^2} g(x, y)\,dx\,dy \approx \sum _{(x,y)\in G} g(x, y) \,dx\,dy,
\]
where $x\approx y$ means that $x-y$ is infinitesimal.
\end{lemma}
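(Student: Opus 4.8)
The plan is to show that the hyperfinite Riemann sum $\sum_{(x,y)\in G} g(x,y)\,dx\,dy$ is infinitely close to the integral $\int_{\R^2} g\,dx\,dy$ by comparing the sum to a standard Riemann sum and invoking the transfer principle. The essential point is that $g$ is continuous with compact support, so the integral is a genuine (proper) Riemann integral, and the sum over $G$ is the hyperfinite analogue of a Riemann sum for a very fine partition.

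First I would record the relevant standard fact to transfer. For a standard continuous function $g$ with support contained in a ball $B_R$, and for standard positive reals $h_1, h_2$ and a standard integer cutoff, the Riemann sum $S(h_1,h_2) = \sum_{k,l} g(k h_1, l h_2)\, h_1 h_2$ (summed over all integers $k,l$ with $|k h_1|, |l h_2| \le R+1$, say) converges to $\int_{\R^2} g$ as $h_1, h_2 \to 0$. More usefully, I would write down an explicit error estimate: since $g$ is uniformly continuous, its modulus of continuity $\omega(\delta) \to 0$ as $\delta \to 0$, and on each grid cell the difference between $g$ and its value at a corner is at most $\omega\big(\sqrt{h_1^2+h_2^2}\big)$. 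Because the support is bounded, only $O(R^2/(h_1 h_2))$ cells contribute, each of area $h_1 h_2$, so one obtains a bound of the form
\begin{equation*}
  \Bigl| S(h_1,h_2) - \int_{\R^2} g\,dx\,dy \Bigr| \le C_g\, \omega\bigl(\tsqrt{h_1^2+h_2^2}\,\bigr),
\end{equation*}
where $C_g$ depends only on $g$ (through $R$ and the total area of the support region), and $\tsqrt{\,\cdot\,}$ denotes the ordinary square root. This is a statement purely about standard reals.

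Next I would transfer this inequality to the hyperreals. Substituting $h_1 = dx$ and $h_2 = dy$, the transferred inequality holds with the hyperfinite sum over the grid $G$ replacing $S(dx,dy)$; one must check that the index set in $G$, namely $|k|\le N, |l|\le M$ with $Ndx, Mdy$ unlimited, indeed contains every grid point where $g$ (extended to $\hR^2$) is nonzero, which holds because the support of the extension of $g$ still lies in $B_R$ with $R$ standard and finite, while $Ndx, Mdy$ are unlimited. Since $dx$ and $dy$ are infinitesimal, $\tsqrt{dx^2+dy^2}$ is infinitesimal, and because $g$ is standard and uniformly continuous, $\omega$ is a standard function with $\omega(0)=0$ and $\omega$ continuous at $0$, so $\omega\bigl(\tsqrt{dx^2+dy^2}\bigr)$ is infinitesimal. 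Multiplying by the standard constant $C_g$ keeps it infinitesimal, which gives exactly $\sum_{(x,y)\in G} g(x,y)\,dx\,dy \approx \int_{\R^2} g\,dx\,dy$.

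The main obstacle I expect is not any single estimate but making the passage from the standard error bound to its hyperreal incarnation fully rigorous. In particular one must phrase the standard statement as a first-order assertion quantifying over $h_1, h_2$ and the cutoffs so that transfer applies cleanly, and one must be careful that the extension $\,^*g$ still vanishes outside the standard ball and that the modulus of continuity transfers to a function whose value at an infinitesimal argument is infinitesimal. Once the error bound is written as a transferable inequality with an explicit standard constant, the infinitesimality of $dx, dy$ does all the remaining work, and no compactness or limiting argument is needed.
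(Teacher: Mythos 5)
Your proof is correct, but it takes a more self-contained route than the paper. The paper's proof shares your first step --- using compact support to replace $\int_{\R^2}$ by $\int_{-\ell}^{\ell}\int_{-\ell}^{\ell}$ and to cut the hyperfinite sum over $G$ down to the grid points inside the supporting square --- but then it simply \emph{cites} the nonstandard characterization of the Riemann integral of a continuous function on a compact rectangle to conclude that the integral and the hyperfinite sum are infinitely close. What you do instead is essentially reprove that characterization: you write down a quantitative standard error bound for the Riemann sum in terms of the modulus of continuity $\omega$, transfer the inequality to hyperreal mesh sizes $dx, dy$, and observe that $\omega$ applied to an infinitesimal argument is infinitesimal (the nonstandard rendering of $\omega(\delta)\to 0$). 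This buys you independence from any previously established nonstandard integration theory, at the cost of having to phrase the error bound as a first-order, transferable statement --- a point you correctly flag as the delicate part. Your check that the grid $G$ (with $Ndx$, $Mdy$ unlimited) contains every point where ${}^*g$ is nonzero is the same bookkeeping the paper does with its hyperintegers $L, L'$. Both arguments are sound; the paper's is shorter because it leans on a known lemma, yours makes the mechanism (uniform continuity plus transfer) explicit.
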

\begin{proof}
The statement of the lemma is very close to the nonstandard definition of the
Riemann integral of a continuous function, the only difference being that we are
integrating over the unbounded domain $\R^{2}$ rather than a compact rectangle
$[-\ell,\ell]\times[-\ell, \ell]\subset\R^{2}$.  Since the function $g$ has
compact support, there is a real $\ell>0$ such that $g(x, y)=0$ outside the
square $[-\ell,\ell]\times[-\ell,\ell]$.  By definition we then have
\[
  \int_{\R^{2}} g(x, y)\,dx\,dy = \int_{-\ell}^{\ell}\int_{-\ell}^{\ell} g(x,
  y)\,dx\,dy.
\]
Choose hyperintegers $L, L'\in\hN$ for which $Ldx\leq \ell <(L+1)dx$ and
$L'dy\leq \ell <(L'+1)dy$.  Then the nonstandard definition of the Riemann
integral implies
\[
  \int_{-\ell}^{\ell}\int_{-\ell}^{\ell} g(x, y)\,dx\,dy \approx
  \sum_{k=-L}^{L}\sum_{l=-L'}^{L'} g(kdx, ldy)\,dx\,dy.
\]
Finally, if $(x, y)\in G$ then $g(x, y)=0$ unless $|x|\leq\ell$ and
$|y|\leq \ell$, so that
\[
  \sum_{k=-L}^{L}\sum_{l=-L'}^{L'} g(kdx, ldy)\,dx\,dy = \sum_{(x,y)\in G} g(x,
  y)\,dx\,dy.
\]
\end{proof}

\begin{lemma}\label{lem-Tf-defined}
Suppose that $f:G\to\hR$ is a hyperreal valued function which is bounded, in the
sense that there exists a limited $C>0$ such that $|f(x,y)|\leq C$ for all
$(x,y)\in G$.  Then the expression
\begin{equation}\label{eq-Tf-defined}
  \langle T_f, \varphi\rangle \stackrel{\rm def}=
  \st \left(\sum_{(x, y)\in G} f(x, y) \varphi(x,y) \, dx\, dy\right)
\end{equation}
defines a distribution on $\R^2$.

If the function $f$ is the nonstandard extension of a (standard) continuous
function $f:\R^2\to\R$, then the distribution $T_f$ coincides with the
distribution defined by \eqref{eq-function-as-distribution}.
\end{lemma}

\begin{proof}
We first verify that the distribution is well defined.  Since $|f(x, y)|\leq C$
for all $(x,y)$ we have
\[
  \left|\sum_{(x, y)\in G} f(x, y) \varphi(x,y) \, dx\, dy\right| \leq C
  \sum_{(x, y)\in G} |\varphi(x,y)| \, dx\, dy \approx C\int_{\R^2}
  |\varphi(x,y)| \, dx\, dy.
\]
Hence the sum in the definition \eqref{eq-Tf-defined} of
$\langle T_f, \varphi\rangle$ is a limited hyperreal, whose standard part is a
well defined real number which satisfies
\[
  |\langle T_f, \varphi\rangle | \leq C\int_{\R^2} |\varphi(x,y)| \, dx\, dy.
\]
Therefore $T_f$ is a well defined distribution.

Let $\langle f, \varphi \rangle$ be defined as in
\eqref{eq-function-as-distribution}.  Fix $\varphi$.  We then have
\[
  \int_{\R^2} f(x,y) \varphi(x,y) \,dx \,dy \approx \sum_{(x,y)\in G} f(x,y)
  \varphi(x,y) \,dx \,dy,
\]
which implies that the distribution defined in
\eqref{eq-function-as-distribution} coincides with $T_f$.
\end{proof}

\section{The Cauchy problem for the heat equation}
In this section we recall the definition of distribution solutions to the Cauchy
problem for the heat equation and show how, by solving the finite difference
approximation to the heat equation on a hyperfinite grid, one can construct a
distribution solution to the Cauchy problem.

\subsection{Formulation in terms of distributions}
We consider the Cauchy problem for the linear heat equation $u_{t}=u_{xx}$ with
bounded and continuous initial data $u(x, 0) = u_{0}(x)$.
\begin{definition}
  A distribution $u$ on $\R^{2}$ is a solution to the heat equation
  $u_{t}=u_{xx}$ with initial data $u_{0}$ if $u$ satisfies
  \begin{equation}\label{eq-linear-heat}
    u_t - u_{xx} = u_0(x)\delta(t), \quad x\in \R, t\in\R
  \end{equation}
  in the sense of distributions, and if $u=0$ for $t\leq 0$.
\end{definition}
Equality in the sense of distributions in \eqref{eq-linear-heat} means that both
sides of the equation are to be interpreted as distributions, and that they
should yield the same result when evaluated on any test function
$\varphi\in C_c^\infty(\Omega)$.  To explain this in more detail, recall that
$\delta$ is Dirac's delta function, so that the action of the right hand side in
\eqref{eq-linear-heat} on a test function is
\[
  \langle u_{0}(x) \delta(t), \varphi\rangle \stackrel{\rm def}= \int_\R u_0(x)
  \varphi(x, 0) \,dx.
\]
The definition of distributional derivative \cite[ch.~9]{FollandRealAnalysis}
says that the action of the left hand side in \eqref{eq-linear-heat} is given by
\[
  \langle u_t-u_{xx}, \varphi\rangle = \langle u, -\varphi_t -
  \varphi_{xx}\rangle.
\]
If the distribution $u$ is given by a function $u:\R^2\to\R$ which vanishes for
$t<0$, and is continuous for $t\geq 0$ (so that it has a simple jump
discontinuity at $t=0$)
then we get
\[
  \langle u_t-u_{xx},\varphi\rangle = \int_\R\int_0^\infty u(x, t)
  \bigl\{-\varphi_t - \varphi_{xx}\bigr\}\,dt\,dx.
\]
A piecewise continuous function $u$ therefore satisfies \eqref{eq-linear-heat}
in the sense of distributions if
\begin{equation}\label{eq-heat-weak-solution}
  \int_\R u_0(x)\varphi(x, 0)\,dt
  +
  \int_\R\int_0^\infty u(x, t)\bigl\{\varphi_t+\varphi_{xx}\bigr\}\,dt\,dx
  =0
\end{equation}
for all test functions $\varphi\in C_c^\infty(\R^2)$.  This is one form of the
classical definition of a weak solution to the Cauchy problem.

\subsection{The finite difference equation}
To construct a distribution solution to \eqref{eq-linear-heat} we introduce a
grid with spacing $dx$ and $dt$, and replace the differential equation by the
simplest finite difference scheme that appears in numerical analysis.  If $u$ is
the solution to the differential equation, then we write $U$ for the
approximating solution to the finite difference equation, using the following
common notation for finite differences:
\begin{gather*}
  D_x^+U(x, t) = \frac{U(x+dx, t) - U(x, t)}{dx},\qquad
  D_x^-U(x, t) = \frac{U(x, t) - U(x-dx, t)}{dx}\\[4pt]
  D_t^+U(x, t) = \frac{U(x, t+dt) - U(x, t)}{dt}.
\end{gather*}
See for example, \cite[chapter 1]{LevequeFiniteDifferenceTextbook}.  With this
notation
\[
  D^2_x U(x, t) \stackrel{\rm def}= D_x^+D_x^- U(x, t) = \frac{U(x+dx, t) -
    2U(x,t) + U(x-dx, t)}{(dx)^2}
\]
The operators $D^+_x$, $D^-_x$, and $D_t^+$ all commute.  A finite difference
equation corresponding to the heat equation $u_t=u_{xx}$ is then
$D^+_t U = D^2_xU $, i.e.
\begin{equation}\label{eq-heat-finite-difference}
  \frac{U(x, t+dt) - U(x, t)}{dt} =
  \frac{U(x+dx, t) - 2U(x,t) + U(x-dx, t)}{(dx)^2}
\end{equation}
We can solve this algebraic equation for $U(x, t+dt)$, resulting in
\begin{equation}\label{eq-heat-finite-difference-solved}
  U(x, t+dt) = \alpha U(x-dx,t) + (1-2\alpha)U(x,t) + \alpha U(x+dx,t)
\end{equation}
where
\[
  \alpha \stackrel{\rm def}= \frac{dt}{(dx)^2}.
\]

\subsection{The approximate solution}\label{sec-approx-solution}
Let $N \in \hN$ be an unlimited hyperfinite integer, and assume that $dx$ and
$dt$ are positive infinitesimals.  Assume moreover that $N$ is so large that
both $Ndt$ and $Ndx$ are unlimited hyperreals.  We then consider the hyperfinite
grid
\[
  G_C = \{(m \,dx,n \,dt) \mid m,n \in \hN, |m|+n \leq N\}.
\]
The initial function $u_0:\R\to\R$ extends to an internal function
$u_0:\hR\to\hR$.  By assumption there is a $C\in\R$ such that $|u_0(x)|\leq C$
for all $x\in\R$, so this also holds for all $x\in\hR$.

We define $U:G_C\to\hR$ by requiring
\begin{itemize}
\item $U(x, 0) = u_0(x)$ for all $x$ with $(x,0) \in G_C$, i.e.~for all $x=kdx$
  with $k\in\{-N, \dots, +N\}$.
\item $U$ satisfies \eqref{eq-finite-difference}, or, equivalently,
  \eqref{eq-heat-finite-difference-solved} at all $(x, t) = (mdx, ndt)\in G_C$
  with $|m|+n < N$.
\end{itemize}

\begin{figure}[t]  
  \includegraphics[width=0.7\textwidth]{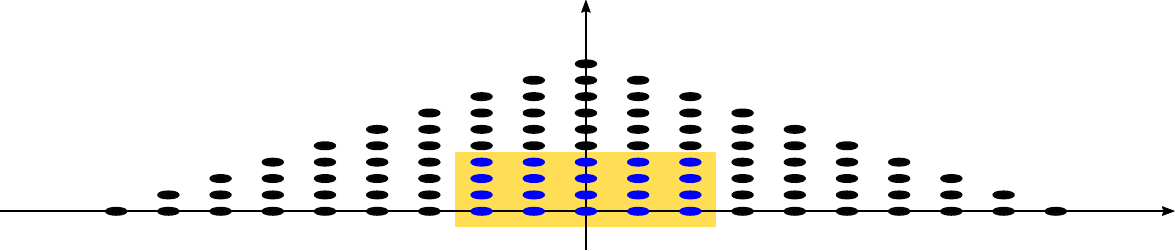}
  \caption{The triangular grid $G_C$; if $U(x, t)$ is known at all grid points
    at the bottom of the triangle, then the finite difference equation
    \eqref{eq-heat-finite-difference} uniquely determines $U(x, t)$ at all other
    grid points.}
  \label{fig-grid}
\end{figure}

\begin{theorem}\label{thm-existence-heat}
Let $U:G_C\to\hR$ be the hyperreal solution of the finite difference scheme
\eqref{eq-heat-finite-difference} with initial values $U(x, 0) = u_0(x)$, and
suppose that $\alpha \leq \frac12$.  Then the expression
\begin{equation}\label{eq-heat-u-def}
  \langle u, \varphi \rangle \stackrel{\rm def}=
  \st \left(\sum U(x,t)\varphi(x,t) \,dx \,dt\right)
  \qquad
  (\varphi\in C_c^\infty(\R^2))
\end{equation}
\[
\]
defines a distribution on $\R^2$ that satisfies \eqref{eq-linear-heat}.
\end{theorem}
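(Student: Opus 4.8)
The plan is to check the three parts of the definition of a distribution solution: that \eqref{eq-heat-u-def} is a genuine distribution, that it vanishes for $t\le 0$, and that it satisfies \eqref{eq-linear-heat} weakly, the last being the real content. First I would establish a discrete maximum principle. Since $\alpha\le\tfrac12$, the coefficients $\alpha,1-2\alpha,\alpha$ in \eqref{eq-heat-finite-difference-solved} are nonnegative and sum to $1$, so each $U(x,t+dt)$ is a weighted average of $U(x-dx,t),U(x,t),U(x+dx,t)$. Using $|U(x,0)|=|u_0(x)|\le C$ and internal induction on the time level $n$, one gets $|U(x,t)|\le C$ throughout $G_C$. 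Hence Lemma~\ref{lem-Tf-defined} applies and \eqref{eq-heat-u-def} defines a distribution $u=T_U$. Moreover every grid point has $t\ge 0$, so any $\varphi$ supported in $\{t<0\}$ vanishes at all grid points and $\langle u,\varphi\rangle=0$; thus $u=0$ for $t\le 0$.

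By the definition of distributional derivative recalled before \eqref{eq-heat-weak-solution}, verifying \eqref{eq-linear-heat} reduces to proving $\st(S)=-\int_\R u_0(x)\varphi(x,0)\,dx$ for every test function $\varphi$, where $S=\sum_{(x,t)\in G_C}U(x,t)\{\varphi_t(x,t)+\varphi_{xx}(x,t)\}\,dx\,dt$. I would first replace the derivatives of $\varphi$ by their finite-difference analogues $D_t^+\varphi$ and $D_x^2\varphi$. Since $\varphi$ is smooth with compact support, Taylor's theorem gives $|\varphi_t-D_t^+\varphi|\le\tfrac12\,dt\,\sup|\varphi_{tt}|$ and $|\varphi_{xx}-D_x^2\varphi|\le\tfrac1{12}(dx)^2\sup|\varphi_{xxxx}|$ at every grid point; multiplying by $|U|\le C$ and summing over the grid points meeting $\operatorname{supp}\varphi$ (whose number times $dx\,dt$ is limited) shows $S\approx S'$, where $S'=\sum U\{D_t^+\varphi+D_x^2\varphi\}\,dx\,dt$.

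The heart of the argument is a discrete integration by parts. Since $\operatorname{supp}\varphi\subset B_R$ with $R$ limited while $Ndx,Ndt$ are unlimited, one checks $R/dx+R/dt<N$, so every grid point in a neighborhood of $\operatorname{supp}\varphi$ is an interior point of the triangle; there the finite difference equation $D_t^+U=D_x^2U$ holds, and all summation-by-parts boundary terms involve $\varphi$ evaluated outside its support and so vanish. Abel summation in $t$ turns $\sum U\,D_t^+\varphi$ into $-\sum(D_t^+U)(x,t)\varphi(x,t+dt)\,dx\,dt-\sum_x u_0(x)\varphi(x,0)\,dx$, the boundary term at $n=0$ producing the initial data. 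Replacing $D_t^+U$ by $D_x^2U$ and then moving $D_x^2$ back onto $\varphi$ via the self-adjoint spatial summation by parts, and combining with the $\sum U\,D_x^2\varphi$ term, yields $S'=E-\sum_x u_0(x)\varphi(x,0)\,dx$, where $E=\sum U\{(D_x^2\varphi)(x,t)-(D_x^2\varphi)(x,t+dt)\}\,dx\,dt$.

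To finish, note $(D_x^2\varphi)(x,t)-(D_x^2\varphi)(x,t+dt)=-dt\,(D_t^+D_x^2\varphi)(x,t)$, and $D_t^+D_x^2\varphi$ lies within an infinitesimal of $\varphi_{xxt}$, hence is limited; together with $|U|\le C$ and compact support, the extra factor $dt$ forces $E\approx 0$. Since $\sum_x u_0(x)\varphi(x,0)\,dx\approx\int_\R u_0(x)\varphi(x,0)\,dx$ by the nonstandard characterization of the Riemann integral, taking standard parts gives $\st(S)=\st(S')=-\int_\R u_0(x)\varphi(x,0)\,dx$, which is exactly \eqref{eq-heat-weak-solution}. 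I expect the main obstacle to be organizing the summation by parts so that the leftover is visibly infinitesimal: the decisive choice is to push the second difference $D_x^2$ onto the smooth $\varphi$ rather than onto $U$, since no bound on $\sum|D_x^2U|\,dx\,dt$ is at hand, whereas $|U|\le C$ and the factor $dt$ make $E$ negligible. Some care is also needed to confirm that $\operatorname{supp}\varphi$ sits strictly inside the hyperfinite triangle, so that the difference equation may be invoked and all boundary terms drop.
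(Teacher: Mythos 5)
Your proof is correct and follows the same overall strategy as the paper's: the condition $\alpha\le\frac12$ makes \eqref{eq-heat-finite-difference-solved} a convex combination, so a discrete maximum principle gives $|U|\le C$ and Lemma~\ref{lem-Tf-defined} shows \eqref{eq-heat-u-def} is a distribution; Taylor expansion replaces $\varphi_t,\varphi_{xx}$ by $D_t^+\varphi,D_x^2\varphi$ with an error that is infinitesimal after summing over the (limited-area) support of $\varphi$; and summation by parts plus the difference equation produces the boundary term $\sum_x u_0(x)\varphi(x,0)\,dx\approx\int_\R u_0(x)\varphi(x,0)\,dx$. The one genuine difference is how you organize the summation by parts, and it buys something real. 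The paper moves both $D_t^+$ and $D_x^2$ from $\varphi$ onto $U$ and then invokes $D_t^+U-D_x^2U=0$; but its time summation by parts leaves $\varphi(x,t+dt)$ against $D_t^+U(x,t)$ while the spatial one leaves $\varphi(x,t)$ against $D_x^2U(x,t)$, and the resulting mismatch $\sum\{\varphi(x,t+dt)-\varphi(x,t)\}D_x^2U\,dx\,dt$ is only $O(dt/(dx)^2)=O(\alpha)$ if one uses the sole available bound $|D_x^2U|\le 4C/(dx)^2$ --- limited but not visibly infinitesimal without a further summation by parts, a point the paper glosses over. Your variant --- move only $D_t^+$ onto $U$, substitute $D_x^2U$ for $D_t^+U$, and push $D_x^2$ back onto $\varphi$ --- never needs any bound on differences of $U$: the leftover $E=\sum U\{(D_x^2\varphi)(x,t)-(D_x^2\varphi)(x,t+dt)\}\,dx\,dt$ carries an explicit factor $dt$ against the limited quantity $D_t^+D_x^2\varphi\approx\varphi_{xxt}$, so it is manifestly infinitesimal. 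You also verify two side conditions the paper leaves implicit: that $u=0$ for $t\le0$, and that the support of $\varphi$ sits strictly inside the hyperfinite triangle (so the difference equation holds wherever it is invoked and all boundary terms of the summation by parts vanish).
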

To show that this expression does indeed define a distribution we must show that
the $U(x, t)$ are bounded by a standard real number.  This follows from a
discrete version of the maximum principle, which we will again use in
section~\ref{sec-rde}, so we state it in slightly greater generality than needed
in this section.  The lemma explains why we need the condition
$\alpha = dt / (dx)^2 \leq \frac12$, and is well known in numerical analysis as
a necessary condition for stability of the finite difference scheme.

\begin{lemma}[Gronwall type estimate]\label{lem-gronwall}
Let $W:G_C\to\hR$ satisfy $W(x, t)\geq 0$ for all $(x, t)\in G_C$, and suppose
that for some nonnegative $m\in\R$ one has
\[
  W(x, t+dt) \leq \alpha W(x+dx, t) + (1-2\alpha)W(x,t) + \alpha W(x-dx, t) +
  dt\, m W(x, t),
\]
at all $(x,t)\in G_C$.  For each $t = n\,dt$ with $0\leq n\leq N$ consider
\[
  w(t) \stackrel{\rm def}= \max_x W(x, t).
\]
If $0\leq \alpha\leq \frac12$ then
\[
  w(t) \leq e^{mt} w(0).
\]
\end{lemma}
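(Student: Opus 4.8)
The plan is to exploit the hypothesis $0\le\alpha\le\frac12$, which is precisely what guarantees that the three coefficients $\alpha$, $1-2\alpha$, and $\alpha$ are all nonnegative. Since these coefficients also sum to $1$, the first three terms on the right-hand side of the assumed inequality form a convex combination of the values $W(x+dx,t)$, $W(x,t)$, and $W(x-dx,t)$, each of which is at most $w(t)=\max_x W(x,t)$. First I would record the resulting pointwise bound
\[
  \alpha W(x+dx,t)+(1-2\alpha)W(x,t)+\alpha W(x-dx,t)\le\bigl(\alpha+(1-2\alpha)+\alpha\bigr)w(t)=w(t),
\]
and, combining this with $dt\,m\,W(x,t)\le dt\,m\,w(t)$, conclude that $W(x,t+dt)\le(1+m\,dt)\,w(t)$ for every $x$ with $(x,t+dt)\in G_C$.

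Second, I would take the maximum over $x$ on the left. Here one checks the minor grid-compatibility point that each grid point present at time $t+dt$ requires only values $W(\,\cdot\,,t)$ at points that are themselves present at time $t$; this holds because the triangular grid $G_C$ contracts by one index as the time level advances, so the hypothesis inequality is available at every relevant $(x,t)$. Taking the maximum then yields the one-step recursion
\[
  w(t+dt)\le(1+m\,dt)\,w(t).
\]

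Third, I would iterate this recursion. Writing $t=n\,dt$, internal induction on $n$ gives $w(n\,dt)\le(1+m\,dt)^{n}\,w(0)$ for all $0\le n\le N$; the induction is legitimate because the predicate $w(n\,dt)\le(1+m\,dt)^n w(0)$ is an internal property of $n\in\hN$. Finally, to convert the geometric factor into an exponential I would invoke the elementary inequality $1+s\le e^{s}$, which holds for all real $s$ and hence, by transfer, for the hyperreal $s=m\,dt$. Raising to the $n$-th power gives $(1+m\,dt)^{n}\le e^{m\,n\,dt}=e^{mt}$, and therefore $w(t)\le e^{mt}w(0)$, as claimed.

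I do not expect a genuinely hard step: the entire argument rests on the single observation that $\alpha\le\frac12$ turns the finite difference scheme into a convex combination, so that the discrete maximum $w(t)$ cannot grow faster than the reaction term $m$ forces it to. The only places warranting care are the grid-compatibility verification at the maximization step and the appeals to transfer that justify both the internal induction over $\hN$ and the use of $1+s\le e^{s}$ for hyperreal $s$.
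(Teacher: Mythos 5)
Your proposal is correct and follows essentially the same route as the paper: bound the convex combination by $w(t)$ to get $w(t+dt)\le(1+m\,dt)w(t)$, then iterate and use $1+s\le e^{s}$ to obtain $w(t)\le e^{mt}w(0)$. The extra remarks on grid compatibility and internal induction are sound refinements of steps the paper leaves implicit, but they do not change the argument.
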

\begin{proof} (Compare \cite[\S7.2--Lemma~I]{john1991partial}.)  The assumption
on $\alpha$ implies that $\alpha\geq 0$ and $1-2\alpha\geq0$. Hence for all $x$
with $(x, t+dt)\in G_C$ we have
\begin{align*}
  W(x, t+dt)
  &= \alpha W(x+dx, t) + (1-2\alpha)W(x,t) + \alpha W(x-dx, t)
    + dt\, m W(x, t) \\
  &\leq \bigl(\alpha+(1-2\alpha)+\alpha + m\, dt\bigr) w(t) \\
  & = (1+m\,dt)w(t).
\end{align*}
Taking the maximum over $x$ we see that
$w(t+dt) \leq (1+m\, dt)w(t) \leq e^{m\, dt}w(t)$.  By induction we then have
for $t=n\,dt$ that $w(t) \leq (e^{mdt})^n w(0) = e^{mt}w(0)$.
\end{proof}

\subsection{Proof of Theorem~\ref{thm-existence-heat}}
\label{sec-proof-distribution-solution}
The relation \eqref{eq-heat-finite-difference-solved} which defines $U(x,t)$
implies that $W(x, t) \stackrel{\rm def}= |U(x, t)|$ satisfies
\begin{align*}
  W(x, t+dt)
  &= \bigl|\alpha U(x-dx,t) + (1-2\alpha)U(x,t) + \alpha U(x+dx,t)\bigr|\\
  &\leq \alpha W(x-dx,t) + (1-2\alpha)W(x,t) + \alpha W(x+dx,t),
\end{align*}
where we have used $\alpha\geq0$ and $1-2\alpha\geq0$.

Since the initial condition is bounded by $|U(x,0)| = |u_0(x)| \leq M$ the
Gronwall type lemma~\ref{lem-gronwall} implies that $|U(x, t)|\leq M$ for all
$(x,t)\in G_C$.  According to Lemma~\ref{lem-Tf-defined} this implies that the
expression \eqref{eq-heat-u-def} does define a distribution $u$ on $\R^2$.

We want to prove that $u$ satisfies the heat equation in the sense of
distributions, i.e.~we want to show for any test function $\varphi$ that
\[
  \langle u_t - u_{xx} ,\varphi \rangle = \langle u_0(x) \delta(t) , \varphi
  \rangle = \int_\R u_0(x) \varphi(x, 0) \, dx.
\]
First, we see from the definition of distributional derivative that
\[
  \langle u_t - u_{xx}, \varphi \rangle = - \langle u, \varphi_t + \varphi_{xx}
  \rangle.
\]
We then have from the definition of $u$ that
\[
  - \langle u , \varphi_t + \varphi_{xx} \rangle \approx \sum_{(x,t)\in G_C}
  -U(x,t) ( \varphi_t (x,t) + \varphi_{xx} (x,t) ) \,dx \,dt \stackrel{\rm def}=
  T.
\]
Using Taylor's formula we replace the partial derivatives of the test function
with its corresponding finite differences, i.e.~we write
$\varphi_t(x,t) = D_t^+ \varphi(x,t) + \varepsilon_t(x,t)$ and
$\varphi_{xx}(x,t) = D_x^2 \varphi(x,t) + \varepsilon_{xx}(x,t)$, where
$\varepsilon_t, \varepsilon_{xx}:G_C\to\hR$ are the infinitesimal error terms in
the Taylor expansion.  Substituting these gives us
\[
  T = \textstyle\sum_{G_C} -U(x,t) \big( D_t^+ \varphi(x,t) + D_x^2 \varphi(x,t)
  + \varepsilon_t + \varepsilon_{xx} \big) \,dx \,dt.
\]
We can split this sum into three parts, $T=T_1+T_2+T_3$, with
\begin{align*}
  T_1 &= \textstyle\sum_{G_C} -U(x,t) D_t^+ \varphi(x,t) \,dx \,dt\\
  T_2 &= \textstyle\sum_{G_C} -U(x,t) D_x^2 \varphi(x,t) \,dx \,dt\\
  T_3 &= \textstyle\sum_{G_C} -U(x,t) (\varepsilon_t + \varepsilon_{xx}) \,dx \,dt
\end{align*}
We will first handle the error term, $T_3$.  Since the test function $\varphi$
has compact support, there exists a real $\ell>0$ such that $\varphi=0$ outside
the rectangle $\Omega = [-\ell, \ell]\times[-\ell, \ell]$.  The errors in the
Taylor expansion therefore also vanish outside of $\Omega$ so that we can write
$T_3$ as
\[
  T_3 = \textstyle\sum_{\Omega\cap G_C} -U(x,t) (\varepsilon_t +
  \varepsilon_{xx}) \,dx \,dt
\]
The key to estimating this sum is that we can estimate all the errors
$\varepsilon_t(x,t)$ and $\varepsilon_{xx} (x,t)$ by one fixed infinitesimal
$\varepsilon>0$ that does not depend on $(x,t)$.  Indeed, the grid $G_C$ is a
hyperfinite internal set, and therefore any internal function such as
$|\varepsilon_t|:G_C\to\hR$ attains its largest value at one of the
$(x,t) \in G_C$, say at $(x_1, t_1)$.  Then
$|\varepsilon_t(x,t)|\leq |\varepsilon_t(x_1, t_1)|$ for all $(x,t)\in G_C$.
Similarly, there is an $(x_2, t_2) \in G_C$ that maximizes
$|\varepsilon_{xx}(x, t)|$.  Now define
\[
  \varepsilon_1 = |\varepsilon_t(x_1, t_1)|, \qquad \varepsilon_2 =
  |\varepsilon_{xx}(x_2, t_2)|.
\]
Then both $\varepsilon_1$ and $\varepsilon_2$ are positive infinitesimals for
which
\[
  |\varepsilon_t(x,t)|\leq \varepsilon_1,\qquad |\varepsilon_{xx}(x, t)| \leq
  \varepsilon_2
\]
holds at all grid points $(x,t) \in G_C$.

If we let $\varepsilon = \max\{\varepsilon_1,\varepsilon_2\}$, then
$|T_3| \leq \sum_{G_C\cap\Omega} 2 U(x,t) \varepsilon \,dx \,dt$.  By the
construction of $U$, we have $|U(x,t)| \leq M$ for all $(x,t)\in G_C$, so we get
\[
  |T_3| \leq \sum_{G_C \cap \Omega} 2M \varepsilon \,dx \,dt \leq 2M \varepsilon
  \,dx \,dt \frac{\ell}{2 \,dx} \frac{\ell}{ \,dt} = M\ell^2 \varepsilon,
\]
which is infinitesimal, so $T_3$ is infinitesimal.

From the definition we have
$T_1 = \sum_{G_C} -U(x,t) (\varphi(x, t+dt) -\varphi(x,t)) \,dx$.  Using the
compact support of $\varphi$, we can then rewrite this sum as
\[
  T_1 = -\sum_{k = -K}^K \sum_{l = 0}^{L+1} U(k \,dx, l \,dt) \bigl\{ \varphi(k
  \,dx, (l+1) \,dt) - \varphi(k \,dx, l \,dt) \bigr\} \,dx,
\]
where $K\,dx \approx \ell$ and $L \,dt \approx \ell$.

Applying summation by parts to this sum we then get
\[
  T_1 = \sum_{k=-K}^K \Bigl\{ U(k \,dx, 0) \varphi( k \,dx,0) + \sum_{l=0}^{L}
  \varphi\bigl(k \,dx, (l+1) \,dt\bigr) D_t^+ U(k \,dx, l \,dt ) \,dt \Bigr\}
  \,dx
\]

Next, for $T_2$, we can split the sum into two parts.
\begin{multline}
  T_2 = \sum_{l = 0}^L \sum_{k = -K-1}^{K+1} -U(k \,dx ,l \,dt) D_x^+ \varphi(k \,dx, l \,dt) \,dt \,dx\\
  + \sum_{l = 0}^L \sum_{k = -K-1}^{K+1} U(k \,dx ,l \,dt) D_x^- \varphi(k \,dx,
  l \,dt) \,dt \, dx
\end{multline}
Applying summation by parts again to both sums we get
\[
  T_2 = -\sum_{l= 0}^L \sum_{k = -K}^K \varphi(k \,dx , l \,dt) D_{x}^2 U(k
  \,dx, l \,dt) \,dx \,dt.
\]
Putting the terms $T_1, T_2, T_3$ all together, we have, because $T_3\approx0$,
\begin{multline}
  T_1 + T_2 + T_3 \approx T_1+T_2 =
  \sum_{k = -K}^K U(k\,dx,0) \varphi(k \,dx, 0) \,dx        \\
  + \sum_{k=-K}^K \sum_{l = 0}^L \varphi(k \,dx, l \,dt) \bigg( D_t^+ U(k\,dx,l
  \,dt) - D_x^2 U(k \,dx, l \,dt) \bigg) \,dx \,dt .
\end{multline}
Since $U$ satisfies the difference equation $D_t^+U = D^2_xU$ at all grid points
this reduces to
\[
  T = T_1+T_2 + T_3 \approx \sum_{k= -K}^K U(k \,dx , 0) \varphi(k \,dx,0) \,dx
  \approx \sum_{k= -K}^K u_0(k \,dx) \varphi(k \,dx,0) \,dx .
\]
Taking the standard part we get the distribution
\[
  \st(T) = \st\left(\sum_{k=-K}^K u_0(k \,dx) \varphi(k \,dx, 0) \,dx \right)
  =\int_{-\ell}^\ell u_0(x)\varphi(x, 0) \,dx = \big\langle u_0(x) \delta(t),
  \varphi \big\rangle.
\]
This completes the proof that
$\langle u_t-u_{xx}, \varphi\rangle = \langle u_0(x)\delta(t), \varphi\rangle$
for all test functions $\varphi$, and thus that $u$ is a distributional solution
of \eqref{eq-linear-heat}.

\subsection{Comments on the proof}
In our construction of solutions to the linear heat equation we completely
avoided estimating derivatives of the approximate solution $U$.  The only
estimate we used was that the approximate solution $U(x,t)$ has the same upper
bound as the given initial function $u_0$.

We assumed that the initial function $u_0$ is continuous.  The one place in the
proof where we needed this assumption was at the end, when we used the fact that
for continuous functions $f:\R\to\R$ one has
\[
  \textstyle\sum_{|k|\leq K} f(k\, dx)\,dx \approx \int_{-\ell}^\ell f(x)\,dx
\]
and applied this to the function $f(x) = u_0(x)\varphi(x, 0)$.

\section{The Cauchy problem for a Reaction Diffusion Equation}
\label{sec-rde}
We consider the reaction diffusion equation
\begin{equation}
  \frac{\partial u}{\partial t} = \frac{\partial^2 u}{\partial x^2} + f(u(x, t)), 
  \qquad (x\in \R, t>0)
  \tag{\ref{eq-rde}}
\end{equation}
with initial condition \eqref{eq-initial-condition}.  It is known that one
cannot expect solutions to exist for all time $t>0$ without imposing some growth
conditions on the nonlinearity $f(u)$.  We will assume that $f$ is a Lipschitz
continuous function, i.e. that for some positive real $K_1$ one has
\begin{equation}\label{eq-f-Lipschitz}
  \forall u, v\in\R : \quad |f(u) - f(v)| \leq K_1|u-v|.
\end{equation}
This implies that $f(u)$ grows at most linearly in $u$:
\begin{equation}\label{eq-f-bound}
  \forall u\in\R : \quad|f(u)| \leq K_0 + K_1|u|
\end{equation}
where $K_0\stackrel{\rm def}=|f(0)|$.

In contrast to the linear heat equation, \eqref{eq-rde} contains the nonlinear
term $f(u)$ which is meaningless if $u$ is an arbitrary distribution.
One can follow the same procedure as in the previous section, i.e.~one can
replace the differential equation by a finite difference scheme on the
hyperfinite grid $G_C$ and construct an approximating solution $U:G_C\to\hR$.
After establishing suitable bounds one can then show that by taking standard
parts as in Lemma~\ref{lem-Tf-defined}, both $U(x, t)$ and $f(U(x,t))$ define
distributions $u$ and $F$ on $\R^2$.  The problem is to give a meaning to the
claim that ``$F = f(u)$,'' because $u$ is merely a distribution and can
therefore not be substituted in a nonlinear function.  In this section we show
how to overcome this problem by adding the assumption that the initial function
is Lipschitz continuous, i.e.
\begin{equation}
  \label{eq-u0-Lipschitz}
  \forall x, y\in\R : \quad |u_0(x) - u_0(y)| \leq L |x-y|
\end{equation}
for some real $L>0$, and showing that the standard part of the approximating
solution $U$ is a continuous function on $\R\times[0, \infty)$.  The
substitution $f(U(x, t))$ is then well defined and we can verify that the
continuous standard function corresponding to $U$ is a distributional solution
of the reaction diffusion equation \eqref{eq-rde}.

\subsection{Weak Solutions to the Reaction Diffusion Equation}
Rather than writing the initial value problem in the distributional form
$u_t-u_{xx}-f(u) = u_0(x)\delta(t)$, we use the integral version
\eqref{eq-heat-weak-solution} of the definition of weak solution.  Thus we
define a \emph{weak solution} to \eqref{eq-rde}, \eqref{eq-initial-condition} to
be a continuous function $u : \R \times [0, \infty)\to\R$ that satisfies
\begin{equation}\label{eq-rde-weak-solution-defined}
  \iint_{\R\times[0, \infty)}
  \bigl \{ u(x,t) (-\varphi_{xx} - \varphi_t) - f(u(x,t)) \varphi \bigr\}
  \,dx \,dt
  = \int_{\R} u_0(x) \varphi(x, 0) \,dx,
\end{equation}
for all test functions $\varphi\in C_c^\infty(\R^2)$.
\begin{theorem}\label{thm-rde-existence-weak-solution}
If $f$ is Lipschitz continuous as in \eqref{eq-f-Lipschitz}, and if the initial
function $u_0$ is bounded by
\[
  \forall x\in\R:\quad |u_0(x)|\leq M,
\]
for some positive real $M$, and if $u_0$ also is Lipschitz continuous, as in
\eqref{eq-u0-Lipschitz}, then the reaction diffusion equation
\eqref{eq-rde},\eqref{eq-initial-condition} has a weak solution.
\end{theorem}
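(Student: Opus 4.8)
The plan is to mirror the construction of Section~\ref{sec-proof-distribution-solution}, now applied to the finite difference scheme $D_t^+U = D_x^2 U + f(U)$, that is
\[
  U(x,t+dt) = \alpha U(x-dx,t) + (1-2\alpha)U(x,t) + \alpha U(x+dx,t) + dt\,f\bigl(U(x,t)\bigr),
\]
solved on the triangular grid $G_C$ with $U(x,0)=u_0(x)$ and $\alpha\le\tfrac12$. First I would establish a uniform bound. Setting $W=|U|$ and using $|f(u)|\le K_0+K_1|u|$ from \eqref{eq-f-bound}, the triangle inequality gives $W(x,t+dt)\le \alpha W(x-dx,t)+(1-2\alpha)W(x,t)+\alpha W(x+dx,t)+dt\,(K_0+K_1W(x,t))$. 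The inhomogeneous term $K_0\,dt$ is absorbed by passing to $\tilde W = W + K_0/K_1$, to which Lemma~\ref{lem-gronwall} applies with $m=K_1$; this yields $|U(x,t)|\le e^{K_1 t}(M+K_0/K_1)$, which is limited whenever $t$ is limited. Since every test function has compact support, this is all the boundedness needed to make the sums in \eqref{eq-Tf-defined} limited.

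Second --- and this is where the Lipschitz hypothesis on $u_0$ enters --- I would prove that $U$ is spatially Lipschitz with a limited constant on limited time intervals. Because the difference operators commute, the spatial difference quotient $V=D_x^+U$ satisfies the \emph{same} scheme with forcing $D_x^+[f(U)]$, and the Lipschitz continuity of $f$ in \eqref{eq-f-Lipschitz} gives $|D_x^+[f(U)](x,t)| = |f(U(x+dx,t))-f(U(x,t))|/dx \le K_1|V(x,t)|$. Thus $|V|$ again satisfies the hypotheses of Lemma~\ref{lem-gronwall} with $m=K_1$, and since $|V(x,0)| = |D_x^+u_0(x)|\le L$ by \eqref{eq-u0-Lipschitz}, we obtain $|D_x^+U(x,t)|\le L\,e^{K_1 t}$, so that $\st(U)$ is Lipschitz in $x$.

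The main obstacle is temporal continuity: with only Lipschitz data one cannot expect $U$ to be Lipschitz in $t$ (the second difference $D_x^2U$ need not remain limited near $t=0$), so I would instead prove a H\"older-$\tfrac12$ estimate $|U(x_0,t_0+s)-U(x_0,t_0)|\le C(\sqrt{s}+s)$ by a barrier/comparison argument. Fixing a grid point $(x_0,t_0)$ and writing $L'=L\,e^{K_1 t_0}$ for the spatial Lipschitz bound just obtained, the parabola $\Psi(x,t)=U(x_0,t_0)+\frac{{L'}^2}{2\beta}+\frac{\beta}{2}(x-x_0)^2+\beta(t-t_0)$ is an exact discrete solution of the homogeneous heat scheme (its second $x$-difference and its forward $t$-difference are both equal to $\beta$), it dominates $U$ at $t=t_0$ for every $\beta>0$, and --- after adding a term accounting for the limited forcing $f(U)$ --- it dominates $U$ by the discrete maximum principle underlying Lemma~\ref{lem-gronwall}. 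Evaluating at $(x_0,t_0+s)$, using the mirror barrier for the reverse inequality, and optimizing over $\beta$ produces the $\sqrt{s}$ modulus. Together with the spatial bound this gives joint continuity of $u(x,t):=\st\bigl(U(\hat x,\hat t)\bigr)$ (for any grid point $(\hat x,\hat t)\approx(x,t)$, the value being independent of the choice), a bounded continuous function with $u(x,0)=u_0(x)$.

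Finally, since $u$ is now a genuine continuous function, $f(u)$ is defined pointwise, and for any grid point $(x,t)$ lying over a standard point in the support of $\varphi$ one has $U(x,t)\approx u(\st(x),\st(t))$, whence $f(U(x,t))\approx f(u(\st(x),\st(t)))$ by continuity of $f$. Consequently $f(U)$ is infinitely close on the grid to the nonstandard extension of the continuous function $f\circ u$, so Lemma~\ref{lem-Tf-defined} gives $\st\bigl(\sum_{G_C} f(U)\varphi\,dx\,dt\bigr)=\iint f(u)\varphi\,dx\,dt$. It then remains only to rerun the summation-by-parts computation of Section~\ref{sec-proof-distribution-solution}; the sole change is that the interior sum no longer vanishes but equals $\sum \varphi\,(D_t^+U - D_x^2U)\,dx\,dt = \sum\varphi\,f(U)\,dx\,dt\approx\iint f(u)\varphi\,dx\,dt$, so that taking standard parts yields exactly the weak formulation \eqref{eq-rde-weak-solution-defined}.
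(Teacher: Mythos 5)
Your proposal is correct and follows essentially the same route as the paper: the same a priori estimates (uniform bound via a Gronwall-type argument, spatial Lipschitz continuity by applying $D_x^+$ to the scheme and bounding the forcing $D_x^+ f(U)$ by $K_1|V|$, temporal H\"older-$\tfrac12$ continuity via parabolic barriers and the discrete maximum principle), the same definition of $u$ as the standard part of $U$ at nearby grid points, and the same summation-by-parts verification of the weak formulation. The only cosmetic difference is that you absorb the inhomogeneous term $K_0\,dt$ by shifting $W$ by $K_0/K_1$ before invoking Lemma~\ref{lem-gronwall}, where the paper instead runs the induction $M_n \leq a M_{n-1} + b$ directly; both yield the bound \eqref{eq-rde-U-upperbound}.
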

\subsection{Definition of the approximate solution}
To construct the solution we consider the grid $G_C$ as defined in
\S~\ref{sec-approx-solution} with infinitesimal mesh sizes $dx, dt>0$, and
consider the finite difference scheme
\begin{equation}\label{eq-rde-finite-diff}
  D^+_t U(x,t) = D^2_xU (x, t) + f(U(x,t)).
\end{equation}
Solving for $U(x, t+dt)$ we get
\begin{multline}
  \label{eq-rde-finite-diff-solved}
  U(x,t+\,dt)
  = \alpha U(x+ \,dx, t) + (1-2 \alpha) U(x,t) + \alpha U(x - \,dx, t)\\
  + dt\, f(U(x,t)),
\end{multline}
where, as before, $\alpha = dt/(dx)^2$.  We extend the continuous function $u_0$
to an internal function $u_0:\hR\to\hR$, and specify the initial conditions
$U(x, 0) = u_0(x)$ for $x=m\,dx$, $m=-N, \dots, +N$.  The finite difference
equation \eqref{eq-rde-finite-diff-solved} then determines $U(x, t)$ for all
$(x,t)\in G_C$.

We now establish a number of \emph{a priori} estimates for the approximate
solution $U$ that will let us verify that its standard part is well defined and
that it is a weak solution of the initial value problem.

\subsection{Boundedness of the approximate solution}
First we establish a bound for $|U(x, t)|$.
\begin{lemma} For all $(x, t)\in G_C$ we have
\begin{equation} \label{eq-rde-U-upperbound} |U(x, t)| \leq e^{K_1t}M +
  \frac{K_0}{K_1}(e^{K_1t}-1).
\end{equation}
\end{lemma}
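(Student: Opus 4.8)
The plan is to reduce the asserted bound to the Gronwall type estimate of Lemma~\ref{lem-gronwall}, which was stated in just enough generality to be reusable here. First I would set $W(x,t) = |U(x,t)|$ and take absolute values in the finite difference relation \eqref{eq-rde-finite-diff-solved}. Because the hypothesis $\alpha\leq\frac12$ gives $\alpha\geq 0$ and $1-2\alpha\geq 0$, the triangle inequality yields
\[
  W(x,t+dt) \leq \alpha W(x+dx,t) + (1-2\alpha)W(x,t) + \alpha W(x-dx,t) + dt\,|f(U(x,t))|.
\]
The linear growth estimate \eqref{eq-f-bound} then bounds $|f(U(x,t))|\leq K_0 + K_1 W(x,t)$, so that
\[
  W(x,t+dt) \leq \alpha W(x+dx,t) + (1-2\alpha)W(x,t) + \alpha W(x-dx,t) + dt\,K_1 W(x,t) + dt\,K_0.
\]

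The one obstacle is that this is not quite the hypothesis of Lemma~\ref{lem-gronwall}: the inhomogeneous constant term $dt\,K_0$ is extra, whereas the Gronwall lemma only treats the purely homogeneous case. The key step is to absorb this constant by a shift. I would introduce $\tilde W(x,t) = W(x,t) + K_0/K_1$, which is again nonnegative, and verify that $\tilde W$ satisfies the hypothesis of Lemma~\ref{lem-gronwall} with $m = K_1$. Indeed, since $\alpha + (1-2\alpha) + \alpha = 1$, adding $K_0/K_1$ to both sides of the previous inequality and rewriting everything in terms of $\tilde W$ makes the constants cancel: the additive $K_0/K_1$ is reproduced exactly by the spatial average (whose coefficients sum to $1$), while the term $dt\,K_1\cdot(K_0/K_1) = dt\,K_0$ arising from $dt\,K_1\tilde W$ cancels the inhomogeneous $dt\,K_0$.

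With the hypothesis checked, Lemma~\ref{lem-gronwall} applies directly to $\tilde W$ with $m = K_1$, giving $\tilde w(t) \leq e^{K_1 t}\tilde w(0)$, where $\tilde w(t) = \max_x \tilde W(x,t) = w(t) + K_0/K_1$. Since $\tilde w(0) = \max_x|u_0(x)| + K_0/K_1 \leq M + K_0/K_1$, undoing the shift yields
\[
  w(t) \leq e^{K_1 t}\Bigl(M + \tfrac{K_0}{K_1}\Bigr) - \tfrac{K_0}{K_1}
  = e^{K_1 t}M + \frac{K_0}{K_1}\bigl(e^{K_1 t} - 1\bigr),
\]
and since $|U(x,t)| = W(x,t) \leq w(t)$ this is exactly \eqref{eq-rde-U-upperbound}. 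Should one prefer to avoid the shift, an equivalent route is to take the maximum over $x$ first, obtaining the scalar recursion $w(t+dt)\leq (1+K_1\,dt)\,w(t) + K_0\,dt$, and then solve this linear recursion explicitly by summing the resulting geometric series and using $(1+K_1\,dt)^n \leq e^{K_1 n\,dt} = e^{K_1 t}$ at $t = n\,dt$. Either way the computation is routine; the only real content is the reduction to the already-established Gronwall estimate.
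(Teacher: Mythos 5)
Your argument is correct, and the opening move is identical to the paper's: take absolute values in \eqref{eq-rde-finite-diff-solved}, use $\alpha\geq 0$ and $1-2\alpha\geq 0$ to apply the triangle inequality, and bound $|f(U)|$ via \eqref{eq-f-bound}. Where you diverge is in how you handle the inhomogeneous term $dt\,K_0$. You absorb it by the shift $\tilde W = W + K_0/K_1$, check (correctly --- the coefficients $\alpha+(1-2\alpha)+\alpha=1$ make the constants cancel exactly) that $\tilde W$ satisfies the hypothesis of Lemma~\ref{lem-gronwall} with $m=K_1$, and then undo the shift. The paper instead passes immediately to $M(t)=\max_x|U(x,t)|$, obtains the scalar recursion $M(t+dt)\leq (1+K_1\,dt)M(t)+K_0\,dt$, and solves this inhomogeneous recursion directly by induction, summing the geometric series --- i.e.\ exactly the ``equivalent route'' you sketch in your last sentences, so you have in effect given both proofs. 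Your primary route buys a cleaner reuse of the already-established Gronwall lemma with no new induction; the paper's route is self-contained at the cost of redoing a one-line induction, and both land on the same constant $e^{K_1t}M + \frac{K_0}{K_1}(e^{K_1t}-1)$. The only (shared) caveat is that the pointwise inequality really holds only at grid points whose spatial neighbors lie in $G_C$, a looseness already present in the paper's own statement and proof of Lemma~\ref{lem-gronwall}, so it is not a gap you introduced; note also that the shift requires $K_1>0$, which the paper guarantees by taking $K_1$ to be a positive real.
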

\begin{proof}
Using \eqref{eq-f-bound}, i.e.~$|f(u)|\leq K_0+K_1|u|$, we get
\begin{multline}
  |U(x, t+dt)|\leq
  \alpha |U(x+dx, t)| + (1-2\alpha)|U(x, t)| +\alpha |U(x-dx, t)|\\
  +dt (K_0 + K_1|U(x, t)|).
\end{multline}
In terms of $M(t) = \max_x|U(x, t)|$ this implies
\[
  M(t+dt) \leq M(t) +dt (K_0+K_1M(t)) = (1+K_1dt)M(t) + K_0dt.
\]
Setting $t=n\,dt$ we see that this is an inequality of the form
$M_n \leq a M_{n-1} +b$ with $M_n = M(ndt)$.  By induction this implies that
\begin{align*}
  M(t) &= M(ndt) \leq (1+K_1dt)^n M(0) + \frac{(1+K_1dt)^n-1}{1+K_1dt -1}K_0dt\\
       &\leq e^{K_1t}M(0) + \frac{K_0}{K_1}(e^{K_1t}-1).
\end{align*}
Since $M(0)= M$ this proves the \eqref{eq-rde-U-upperbound}.
\end{proof}

\subsection{Lipschitz continuity in space of the approximate solution}
We now show that $U(x, t)$ is Lipschitz continuous in the space variable.
\begin{lemma}
For any two points $(x, t), (x', t)\in G_C$ we have
\begin{equation}\label{eq-rde-U-space-lipschitz}
  |U(x, t) - U(x', t)| \leq L e^{K_1 t} |x-x'|,
\end{equation}
where $L$ is the Lipschitz constant for the initial function $u_0$, as
in~\eqref{eq-u0-Lipschitz}.
\end{lemma}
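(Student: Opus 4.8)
The plan is to track how the finite difference scheme propagates the Lipschitz bound on the initial data forward in time. I would introduce the spatial difference $V(x,t) \stackrel{\rm def}= U(x,t) - U(x',t)$, where $x' = x + j\,dx$ for a fixed hyperinteger $j$, and show that $|V|$ satisfies a Gronwall-type recursion of exactly the form handled by Lemma~\ref{lem-gronwall}. The natural quantity to bound is $v(t) \stackrel{\rm def}= \max_x |U(x+j\,dx,t) - U(x,t)|$, the maximum over all nearest-neighbor-type differences at fixed separation; once I bound this, the general case $|x - x'| = |j|\,dx$ follows by summing at most $|j|$ such single-step differences via the triangle inequality.

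First I would subtract the scheme \eqref{eq-rde-finite-diff-solved} applied at $x'$ from the same scheme applied at $x$. Because the linear diffusion part of the scheme is the same convex averaging operator $\alpha(\cdot)_{+} + (1-2\alpha)(\cdot) + \alpha(\cdot)_{-}$ used in the heat equation, the difference $V(x,t+dt)$ is again a convex combination of the values $V(x\pm dx,t)$ and $V(x,t)$, plus the extra term $dt\,\bigl(f(U(x,t)) - f(U(x',t))\bigr)$. Here the Lipschitz hypothesis \eqref{eq-f-Lipschitz} on $f$ gives $|f(U(x,t)) - f(U(x',t))| \leq K_1 |V(x,t)|$, which is precisely the $dt\,m\,W(x,t)$ reaction term in Lemma~\ref{lem-gronwall} with $m = K_1$. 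Taking absolute values and using $\alpha \geq 0$, $1-2\alpha \geq 0$, I would obtain
\[
  |V(x,t+dt)| \leq \alpha|V(x+dx,t)| + (1-2\alpha)|V(x,t)| + \alpha|V(x-dx,t)| + dt\,K_1|V(x,t)|,
\]
so $W = |V|$ satisfies the hypotheses of the Gronwall lemma.

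Applying Lemma~\ref{lem-gronwall} with $m = K_1$ then yields $v(t) \leq e^{K_1 t} v(0)$. The initial bound $v(0) = \max_x |u_0(x+j\,dx) - u_0(x)| \leq L|j|\,dx$ comes directly from the Lipschitz continuity \eqref{eq-u0-Lipschitz} of $u_0$, so $v(t) \leq L e^{K_1 t} |j|\,dx = L e^{K_1 t}|x - x'|$ for the single fixed separation. To pass to arbitrary $(x,t),(x',t) \in G_C$, I would write $x' - x$ as an integer multiple $j\,dx$ and telescope through the intermediate grid points, each consecutive pair contributing at most $L e^{K_1 t}\,dx$; the triangle inequality then gives exactly \eqref{eq-rde-U-space-lipschitz}.

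The main obstacle I anticipate is a bookkeeping subtlety rather than a deep one: the grid $G_C$ is triangular, so the set of $x$ for which $(x,t) \in G_C$ shrinks as $t$ increases, and the quantity $v(t) = \max_x|V(x,t)|$ must be defined over the appropriate internal slice at each time level. I would need to confirm that the recursion only ever invokes values $V(x\pm dx, t)$ that lie in $G_C$ when $(x,t+dt) \in G_C$ — which is guaranteed by the construction in \S\ref{sec-approx-solution}, since $|m|+n < N$ is exactly the condition under which the scheme is applied. A second minor point is that the telescoping argument fixes $j$ as a standard or limited hyperinteger, but the bound $L e^{K_1 t}|x-x'|$ holds uniformly regardless, since the constant $L e^{K_1 t}$ does not depend on $j$; the internal maximum principle embodied in Lemma~\ref{lem-gronwall} handles the hyperfinite character automatically, so no separate compactness or limiting argument is required.
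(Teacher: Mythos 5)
Your proposal is correct and follows essentially the same route as the paper: the paper sets $V = D_x^+U$ (the adjacent-point difference quotient rather than your undivided difference at separation $j\,dx$), shows $|V|$ satisfies the Gronwall recursion of Lemma~\ref{lem-gronwall} with $m=K_1$ via the Lipschitz condition on $f$, and concludes $|V(x,t)|\leq Le^{K_1t}$ from $|V(x,0)|\leq L$, with the general two-point bound following by the same telescoping you describe. The difference is purely cosmetic.
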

\begin{proof}
Let
\[
  V(x,t) \stackrel{\rm def}= \frac{U(x+\,dx,t) - U(x,t)}{dx} = D_x^+U(x, t).
\]
Applying $D_x^+$ to both sides of the equation \eqref{eq-finite-difference} for
$U$, and using the definition of $V$ and commutativity of the difference
quotient operators, we find
\[
  D_t^+ V = D_x^2 V + D_x^+ f(U).
\]
Solving for $V(x, t+dt)$ we find
\[
  V(x, t+dt) = \alpha V(x+dx, t) + (1-2\alpha)V(x, t) +\alpha V(x-dx, t) + D_x^+
  f(U)
\]
Examining $D_x^+ f(U)$, we have
\begin{multline*}
  |D_x^+ f(U)| = \frac{|f(U(x+\,dx, t)) - f(U(x,t))|}{dx} \leq\\
  \frac{K_1\,|U(x + \,dx , t) - U(x,t)|}{dx} = K_1|V(x,t)|,
\end{multline*}
so that
\[
  |V(x, t+dt)|\leq \alpha |V(x+dx, t)| + (1-2\alpha)|V(x, t)| +\alpha|V(x-dx,
  t)| + K_1 dt |V(x, t)|.
\]
Using Gronwall's Inequality on $\max\limits_x V$, we get the inequality
\[
  \max_x |V(x,t)| \leq e^{K_1t} \max_x |V(x,0)|.
\]
The initial condition $u_0$ satisfies $|u_0(x)-u_0(x')| \leq L|x-x'|$ for all
$x, x'\in\R$, and therefore the extension of $u_0$ to the hyperreals satisfies
this same inequality.  Therefore $|V(x, 0)|\leq L$ for all grid points $(x, 0)$,
and thus we have $|V (x, t)|\leq Le^{K_1 t}$.  This implies
\eqref{eq-rde-U-space-lipschitz}.
\end{proof}

\subsection{H\"older continuity in time of the approximate solution}
\begin{lemma}
Given any real $\bar t>0$ we have for any two grid points
$(x_0, t_0), (x_0, t_1)\in G_C$ with $0\leq t_0 \leq t_1 \leq \bar t$
\begin{equation} \label{eq-rde-U-time-holder} |U(x_0, t_1)-U(x_0, t_0)| \leq
  C\sqrt{t_1-t_0}
\end{equation}
where $C$ is a constant that only depends on $\bar t$, $K_0$, $K_1$, $L$, and
$M$.
\end{lemma}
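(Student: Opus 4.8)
The plan is to exploit parabolic scaling: over a time interval of length $\tau=t_1-t_0$ diffusion spreads information across a spatial scale of order $\sqrt{\tau}$, and the $\sqrt{\tau}$ in \eqref{eq-rde-U-time-holder} appears when one compares $U$ at the single point $x_0$ with its spatial average over a window of this width. Concretely, I would fix $x_0$ and introduce a discrete ``tent'' weight $\psi$ on the grid, supported on $|x-x_0|\le h$ with $h\approx\sqrt{\tau}$ chosen as a multiple $p\,dx$ of $dx$, normalised so that (for the explicit discrete tent) one has exactly $\sum_x\psi(x)\,dx=1$ and $\sum_x|D_x^-\psi(x)|\,dx\le 2/h$. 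Setting $A(t)=\sum_x\psi(x)U(x,t)\,dx$, I would split
\[
  U(x_0,t_1)-U(x_0,t_0) = \bigl(U(x_0,t_1)-A(t_1)\bigr) + \bigl(A(t_1)-A(t_0)\bigr) + \bigl(A(t_0)-U(x_0,t_0)\bigr),
\]
and estimate the three pieces separately.

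The two outer ``averaging error'' terms are controlled by the spatial Lipschitz bound \eqref{eq-rde-U-space-lipschitz} already proved: since $\sum_x\psi\,dx=1$,
\[
  |U(x_0,t_i)-A(t_i)| \le \sum_x\psi(x)\,|U(x_0,t_i)-U(x,t_i)|\,dx \le Le^{K_1\bar t}\sum_x\psi(x)|x-x_0|\,dx \le Le^{K_1\bar t}\,h,
\]
which is $O(\sqrt{\tau})$ by the choice of $h$. For the middle term I would use the difference equation \eqref{eq-rde-finite-diff}, $D_t^+U=D_x^2U+f(U)$, to write $A(t_1)-A(t_0)$ as a time sum $\sum_n dt\,\sum_x\psi(x)\bigl(D_x^2U+f(U)\bigr)\,dx$. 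The reaction part is harmless: by \eqref{eq-rde-U-upperbound} we have $|U|\le M_1=e^{K_1\bar t}M+\frac{K_0}{K_1}(e^{K_1\bar t}-1)$, so $|f(U)|\le K_0+K_1M_1$, and using $\sum_x\psi\,dx=1$ its contribution is at most $(K_0+K_1M_1)(t_1-t_0)\le (K_0+K_1M_1)\sqrt{\bar t}\,\sqrt{t_1-t_0}$.

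The crux is the diffusion part $\sum_n dt\,\sum_x\psi(x)D_x^2U(x,n\,dt)\,dx$. The pointwise second difference $D_x^2U$ is only of order $L/dx$, so summing it naively over time gives a useless bound of order $\tau/dx$. The key step is to move one difference off $U$ by discrete summation by parts, $\sum_x\psi\,D_x^2U=-\sum_x(D_x^-\psi)(D_x^-U)$, and then pair the bounded first difference $|D_x^-U|\le Le^{K_1\bar t}$ (which follows from \eqref{eq-rde-U-space-lipschitz} with $x'=x-dx$) against the total variation of the weight. This gives $|\sum_x\psi\,D_x^2U\,dx|\le Le^{K_1\bar t}\cdot(2/h)$ at each time step, hence a diffusion contribution of at most $2Le^{K_1\bar t}(t_1-t_0)/h=O(\sqrt{\tau})$, again by $h\approx\sqrt{\tau}$. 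Collecting the three estimates yields \eqref{eq-rde-U-time-holder} with a constant $C$ built from $\bar t,K_0,K_1,L,M$.

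The main obstacle --- and the point at which the exponent $\tfrac12$ is forced --- is exactly this trade-off: the averaging error grows like $Lh$ while the diffusion term grows like $L\tau/h$, and balancing them dictates $h\sim\sqrt{\tau}$. Two technical points remain. First, when $\tau$ is so small that $\sqrt{\tau}\le dx$ (in particular when $t_1-t_0$ is a single step, where $\alpha\le\tfrac12$ forces $dt\le (dx)^2/2$), no genuine averaging window fits on the grid; here I would instead estimate $U(x_0,t_1)-U(x_0,t_0)$ directly from the telescoped difference equation, bounding $|D_x^2U|\le 2Le^{K_1\bar t}/dx$ and using $\tau/dx\le\sqrt{\tau}$. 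Second, the summation by parts and the use of the equation require the support $|x-x_0|\le h$ to lie in the interior of the triangular grid $G_C$ at all times up to $t_1$; since $h\le\sqrt{\bar t}$ is limited while $N$ is unlimited, this holds whenever $x_0$ and $t_1$ stay in the region where $G_C$ has spatial width at least $h$, which is all that is needed because the estimate will only be applied at limited $(x_0,t_1)$.
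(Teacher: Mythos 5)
Your argument is correct, but it is a genuinely different proof from the one in the paper. The paper constructs explicit parabolic barriers $\bar U(x,t)=U(x_0,t_0)+a(t-t_0)+\tfrac b2(x-x_0)^2+c$ with $a=b+A+1$ and $c=\bar L^2/b$, runs the discrete maximum principle (the same $0\le\alpha\le\tfrac12$ comparison structure as Lemma \ref{lem-gronwall}) to conclude $U<\bar U$, and then optimizes over $b$; you instead compare $U(x_0,\cdot)$ with a spatial average $A(t)=\sum_x\psi(x)U(x,t)\,dx$ over a tent of width $h$, control the averaging error by the spatial Lipschitz bound \eqref{eq-rde-U-space-lipschitz}, and control $A(t_1)-A(t_0)$ by summing the equation \eqref{eq-rde-finite-diff} in time and integrating by parts once in space to pair $D_x^-U$ (bounded by $\bar L=Le^{K_1\bar t}$) against the total variation $2/h$ of the weight. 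The two proofs balance the same two quantities ($\bar Lh$ against $\bar L\tau/h$, versus $\bar L^2/b$ against $b\tau$ with $b\sim\bar L/h$), which is why both force the exponent $\tfrac12$, but yours replaces the comparison principle by a duality/weak-formulation identity. What your route buys: it gives the two-sided bound in one pass (the paper only does the upper barrier and waves at the lower one), and it would survive for discretizations or systems with no maximum principle, since it only uses the equation as an identity plus the already-established Lipschitz bound. What it costs: the two technical patches you correctly flag --- the separate telescoping estimate when $\sqrt{t_1-t_0}\le dx$, and the requirement that the window of width $h$ fit inside the triangular grid at all intermediate times. On the second point, note that the paper's barrier proof actually covers \emph{every} grid point (each point at level $n+1$ of $G_C$ has all three parents at level $n$, and the quadratic barrier dominates the linear Lipschitz bound on the entire initial slice), whereas your proof, as written, misses grid points within $O(h/dx)$ steps of the lateral edges of the triangle; since those points are at unlimited $|x|$ and the lemma is only invoked at limited $(x,t)$, this does not affect the paper's application, but strictly speaking it proves a slightly weaker statement than the lemma as stated unless you add a one-sided window or a short-time telescoping patch near the edges.
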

\begin{proof}
We begin by oberving that $f(U(x,t))$ is bounded on the time interval we are
considering.  Indeed, for $t\leq \bar t$ we have shown for all $(x,t)\in G_C$
that
\[
  |U(x, t)| \leq A_0 \stackrel{\rm def}= e^{K_1 \bar t}M + \frac{K_1}{K_0}
  \left(e^{K_1\bar t}-1\right),
\]
while the Lipschitz condition for $f$ implies
$|f(U(x,t))| \leq K_0+K_1|U| \leq K_0+K_1A_0$.  So if we set $A=K_0+K_1A_0$,
then we have
\begin{equation} \label{eq-fU-bound} 
  \forall (x, t)\in G_C \text{ with }t\leq
  \bar t:\quad |f(U(x, t))| \leq A.
\end{equation}

Next, we construct a family of upper-barriers for $U$ using parabolas.  In
particular, for any real $a, b, c>0$ we consider
\[
  \bar U(x, t) \stackrel{\rm def}= U(x_0,t_0) + a(t-t_0) + \frac b2(x-x_0)^2+c.
\]
For any $b>0$ we will find $a, c>0$ so that $\bar U$ is an upper barrier, in the
sense that
\begin{equation} \label{eq-upper-barrier} D_t^+\bar U - D^2_x\bar U \geq A + 1,
\end{equation}
and
\begin{equation} \label{eq-upper-barrier-initial-data} \bar U(x, 0) > U(x, 0)
  \text{ for all } (x,0)\in G_C.
\end{equation}
A direct computation shows that $D_t^+ \bar U - D^2_x \bar U = a-b$, so for
given $b$ we choose $a= b+A+1$ and \eqref{eq-upper-barrier} will hold.

To satisfy \eqref{eq-upper-barrier-initial-data} we use
\eqref{eq-rde-U-space-lipschitz}, i.e.~that $U(x, t)$ is Lipschitz continuous
with Lipschitz constant $\bar L \stackrel{\rm def}= e^{K_1\bar t}L$:
\[
  U(x, t)\leq U(x_0, t_0) + \bar L |x-x_0| \leq U(x_0, t_0) + \frac 2b(x-x_0)^2
  + \frac{\bar L^2}{2b}.
\]
If we choose $c>\bar L^2/2b$, e.g.~$c=\bar L^2/b$, then our upper barrier
$\bar U$ also satisfies \eqref{eq-upper-barrier-initial-data}.

Next, we apply a maximum principle argument to compare $U$ and $\bar U$.
Consider $W(x, t)= U(x, t) -\bar U(x, t)$.  Then we have shown that $W(x, 0)<0$
for all $x$, and $D_t^+W- D_x^2W <0$, which implies
\[
  W(x, t+dt) < \alpha W(x-dx, t) + (1-2\alpha) W(x, t) + \alpha W(x+dx, t)
\]
for all $(x, t)$ for which $(x\pm dx, t) \in G_C$.  By induction we get
$W(x, t)<0$ for all $(x,t) \in G_C$.  In particular $U(x_0, t) < \bar U(x_0, t)$
for all $t>t_0$, i.e.~we have shown
\[
  U(x_0, t_1) < U(x_0, t_0) + (b+A+1)(t_1-t_0) + \frac{\bar L^2}{b}.
\]
This upper bound holds for any choice of $b>0$.  To get the best upper bound we
minimize the right hand side over all $b>0$.  The best bound appears when
$b = \bar L/\sqrt{t_1-t_0}$.  After some algebra one then finds
\[
  U(x_0, t_1)-U(x_0, t_0) < (A+1) (t_1-t_0) + 2 \bar L\sqrt{t_1-t_0}.
\]
Finally, using
$t_1-t_0 = \sqrt{t_1-t_0}\sqrt{t_1-t_0} \leq \sqrt{\bar t}\sqrt{t_1-t_0}$ we get
\[
  U(x_0, t_1)-U(x_0, t_0) < \bigl((A+1)\sqrt{\bar t} + 2 \bar L\bigr)
  \sqrt{t_1-t_0}.
\]
This proves the upper bound in \eqref{eq-rde-U-time-holder}.  To get the
analogous lower bound one changes the signs of the coefficients $a,b,c$ which
will turn $\bar U$ into a lower barrier.  After working through the details one
finds the appropriate lower bound.
\end{proof}
Now that we have Lipschitz in space and $\textrm{H\"older}$ in time, we also
have for any pair of points $(x, t), (y, s)\in G_C$ that
\begin{multline} \label{eq-U-continuous}
  |U(x,t) - U(y,s)| \leq |U(x,t) - U(y,t)| + |U(y,t) - U(y,s)| \\
  \leq L|x-y| + C \sqrt{|t-s|} .
\end{multline}

\subsection{Definition of the weak solution}
So far we have been establishing estimates for the solution $U$ to the finite
difference scheme.  It is worth pointing out that a standard existence proof
would have required exactly the same estimates.  At this point however, the
standard and nonstandard proofs diverge.

For $(x, t)\in\R\times[0, \infty)$ we choose $(\tilde x, \tilde t)\in G_C$ with
$x\approx \tilde x$ and $t\approx \tilde t$, and then define
$u(x, t) = \st (U(\tilde x, \tilde t))$.  The continuity property
\eqref{eq-U-continuous} of the approximate solution $U$ implies that the value
of $\st (U(\tilde x, \tilde t))$ does not depend on how we chose the grid point
$(\tilde x, \tilde t)$, for if $(\hat x, \hat t)\in G_C$ also satisfied
$\hat x\approx x$, $\hat t\approx t$, then $\tilde x\approx \hat x$ and
$\tilde t\approx \hat t$, so that
$U(\tilde x, \tilde t)\approx U(\hat x, \hat t)$.  It follows directly that the
function $u:\R\times[0, \infty)\to\R$ is well defined, that it satisfies the
continuity condition \eqref{eq-U-continuous}, and that it satisfies the same
bounds as in \eqref{eq-rde-U-upperbound}.

By the transfer principle the standard function $u$ extends in a unique way to
an internal function $\hR\times{}^*[0, \infty) \to \hR$.  It is common practice
to abuse notation and use the same symbol $u$ for the extension.  The extended
function satisfies the same continuity condition \eqref{eq-U-continuous}.
\begin{lemma}
If $(x, t)\in G_C$ is limited, then $u(x, t)\approx U(x, t)$.
\end{lemma}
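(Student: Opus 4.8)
The plan is to compare $U(x,t)$ and $u(x,t)$ by passing through the standard point whose coordinates are the standard parts of $x$ and $t$. Since $(x,t)\in G_C$ is limited, I would set $x_0=\st(x)$ and $t_0=\st(t)$; these are standard reals, and $t_0\geq 0$ because every grid point has $t=n\,dt$ with $n\in\hN$. By construction $x\approx x_0$ and $t\approx t_0$, so $(x,t)$ is itself a grid point infinitely close to $(x_0,t_0)$.

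First I would check that $\st(U(x,t))$ is meaningful, i.e.\ that $U(x,t)$ is limited. As $t$ is limited we may pick a real $\bar t\geq t$, and then the bound \eqref{eq-rde-U-upperbound} gives $|U(x,t)|\leq e^{K_1\bar t}M+\tfrac{K_0}{K_1}(e^{K_1\bar t}-1)$, a limited quantity; hence $U(x,t)\approx\st(U(x,t))$.

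Next I would identify $\st(U(x,t))$ with the value of the standard solution at $(x_0,t_0)$. This is immediate from the definition of $u$: because $(x,t)$ is a grid point with $x\approx x_0$ and $t\approx t_0$, and because the well-definedness of $u$ was established precisely from the continuity estimate \eqref{eq-U-continuous}, we have $u(x_0,t_0)=\st(U(x,t))$.

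Finally I would travel from the standard point $(x_0,t_0)$ back to the grid point $(x,t)$ using the continuity of the internal extension of $u$, which still obeys \eqref{eq-U-continuous}:
\[
  |u(x,t)-u(x_0,t_0)|\leq L|x-x_0|+C\sqrt{|t-t_0|}.
\]
Both $|x-x_0|$ and $|t-t_0|$ are infinitesimal, and the square root of an infinitesimal is infinitesimal, so the right-hand side is infinitesimal and $u(x,t)\approx u(x_0,t_0)$. Chaining the three relations yields
\[
  u(x,t)\approx u(x_0,t_0)=\st(U(x,t))\approx U(x,t),
\]
as claimed. I expect the only real obstacle to be bookkeeping: keeping distinct the two meanings of the symbol $u$ (the standard solution at the standard point $(x_0,t_0)$ versus its internal extension at the grid point $(x,t)$), and noting that the H\"older-type modulus $C\sqrt{|t-t_0|}$ still collapses to an infinitesimal after extension. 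No genuine estimation beyond this is needed.
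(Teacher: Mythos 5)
Your proof is correct and follows essentially the same route as the paper's: take standard parts $(x_0,t_0)=(\st x,\st t)$, use the definition of $u$ to get $u(x_0,t_0)\approx U(x,t)$, and use the continuity estimate \eqref{eq-U-continuous} for the extension of $u$ to get $u(x,t)\approx u(x_0,t_0)$. Your version is merely more explicit about verifying that $U(x,t)$ is limited and about distinguishing $u$ from its internal extension, both of which the paper leaves implicit.
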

\begin{proof}
If $(x, t)$ is limited, then $x'=\st (x)$ and $t'=\st (t)$ are well defined real
numbers.  By continuity of both $u$ we have $u(x, t)\approx u(x', t')$.  By
definition of $u$ it follows from $x'\approx x$ and $t'\approx t$ that
$u(x', t') \approx U(x, t)$.  Combined we get $u(x, t)\approx U(x, t)$.
\end{proof}

\subsection{Proof that $u$ is a weak solution}
We will now show that $u$ is a weak solution whose existence is claimed in
Theorem~\ref{thm-rde-existence-weak-solution}, i.e.~we verify that $u$ satisfies
\eqref{eq-rde-weak-solution-defined} for any test function
$\varphi\in C_c^\infty(\R^2)$.

Since $\varphi$ has compact support, there is a positive real $\ell$ such that
$\varphi(x, t)=0$ outside the square $[-\ell, \ell]\times [-\ell, \ell]$.  We
therefore have to verify
\[
  \int_0^\ell\int_{-\ell}^\ell \left \{ u(x,t) (-\varphi_{xx} - \varphi_t) -
    f(u(x,t)) \varphi \right\} \,dx \,dt = \int_{-\ell}^\ell u_0(x) \varphi(x,
  0) \,dx.
\]
Since the integrands are continuous functions we only make an infinitesimal
error when we replace the two Riemann integrals by Riemann sums over the part of
the grid $G_C$ that lies within the square $[-\ell, \ell]\times[-\ell, \ell]$.
Thus we must prove
\begin{equation} \label{eq-rde-to-prove} \sum_{(x,t)\in G_C}\bigl \{ u(x,t)
  (-\varphi_{xx} - \varphi_t) - f(u(x,t)) \varphi \bigr\} \,dx \,dt \approx
  \sum_{(x, 0)\in G_C} u_0(x) \varphi(x, 0) \,dx.
\end{equation}

We now intend to replace $u$ by $U$, and the derivatives of $\varphi$ by the
corresponding finite differences.  In doing so we make errors that we must
estimate.  Let $G_{C\ell} = G_C\cap [-\ell, \ell]^2 $, so that the only nonzero
terms in the two sums come from terms evaluated at points in $G_{C\ell}$.  The
intersection of internal sets is again internal, so the set $G_{C\ell}$ is
internal and hyperfinite.

For each $(x, t)\in G_{C\ell}$ the quantities
\[
  |u(x, t) - U(x, t)|,\quad |\varphi_t(x, t)- D_t^+\varphi(x, t)|, \text{ and }
  |\varphi_{xx}(x, t)-D^2_x\varphi(x, t)|
\]
are infinitesimal.  Since they are defined by internal functions, one of the
numbers in the hyperfinite set
\[
  \left\{ |u(x, t) - U(x, t)|, |\varphi_t(x, t)- D_t^+\varphi(x, t)|,
    |\varphi_{xx}(x, t)-D^2_x\varphi(x, t)| : (x, t)\in G_{C\ell} \right\}
\]
is the largest.  This number, which we call $\varepsilon$, is again
infinitesimal.  Therefore we have
\begin{equation}\label{eq-rde-error-estimate}
  \max_{G_{C\ell}}\left\{
    |u(x, t) - U(x, t)|, |\varphi_t(x, t)- D_t^+\varphi(x, t)|,
    |\varphi_{xx}(x, t)-D^2_x\varphi(x, t)| \right\}
  \leq \varepsilon
\end{equation}
for some infinitesimal $\varepsilon>0$.

The remainder of the argument is very similar to our proof in
\S~\ref{sec-proof-distribution-solution} that the distribution $u$ defined was a
distribution solution to the linear heat equation.  Namely, if we replace $u$ by
$U$ and derivatives of $\varphi$ by finite differences of $\varphi$ in
\eqref{eq-rde-to-prove}, then \eqref{eq-rde-error-estimate} implies that we only
make an infinitesimal error on both sides.  We therefore only have to prove
\[
  \sum_{(x,t)\in G_C}\bigl \{ U(x,t) (-D^2_x\varphi - D^+_t\varphi_t) -
  f(u(x,t)) \varphi \bigr\} \,dx \,dt \approx \sum_{(x, 0)\in G_C} u_0(x)
  \varphi(x, 0) \,dx.
\]
This follows after applying summation by parts, and using the finite difference
equation \eqref{eq-rde-finite-diff} satisfied by $U$.  This completes the
existence proof.


%


\bibliographystyle{plain} \bibliography{refs.bib}

\end{document}